\documentclass[10pt]{amsart}

\usepackage{amssymb, amsmath}
\usepackage{mathrsfs}
\usepackage{amscd}
\usepackage{verbatim}

\usepackage[colorlinks,linkcolor={blue},
citecolor={blue},urlcolor={red},]{hyperref}


\theoremstyle{plain}
\newtheorem{theorem}{Theorem}[section]
\theoremstyle{remark}
\newtheorem{remark}[theorem]{Remark}

\theoremstyle{plain}
\newtheorem{corollary}[theorem]{Corollary}
\newtheorem{lemma}[theorem]{Lemma}
\newtheorem{proposition}[theorem]{Proposition}

\numberwithin{equation}{section}



\newcommand{\E}{{\mathbb E}}
\renewcommand{\P}{{\mathbb P}}


\newcommand{\g}{\gamma}

\newcommand{\e}{\varepsilon}

\renewcommand{\O}{\Omega}



\newcommand{\lb}{\langle}
\newcommand{\rb}{\rangle}

\begin{document}

\title[Optimal probability lower bounds]{A note on optimal probability lower bounds for centered random variables}

\author{Mark Veraar}
\address{Institut f\"ur Analysis \\
Universit\"at Karlsruhe (TH)\\
D-76128  Karls\-ruhe\\Germany}

\email{mark@profsonline.nl}
\thanks{This work was carried out when the author was working in the Institute of
Mathematics of the Polish Academy of Sciences, supported by the
Research Training Network MRTN-CT-2004-511953}.

\keywords{centered random variables, tail estimates, second order
chaos, Rademacher sums}

\subjclass[2000]{60E15}

\date\today

\begin{abstract}
In this note we obtain lower bounds for $\P(\xi\geq 0)$ and $\P(\xi>
0)$ under assumptions on the moments of a centered random variable
$\xi$. The obtained estimates are shown to be optimal and improve
results from the literature. The results are applied to obtain
probability lower bounds for second order Rademacher chaos.
\end{abstract}

\maketitle
\section{Introduction}

In this note we obtain lower bounds for $\P(\xi\geq 0)$ and $\P(\xi>
0)$ under assumptions on the moments of $\xi$. Here $\xi$ is a
centered real-valued random variable. For instance we consider the
case where the first and $p$-th moment are fixed, and the case where
the second and $p$-th moment are fixed. Such lower bounds are used
in \cite{Bu,delaPG,HK,Kw87} to estimate tail probabilities. It can
be used to estimate $\P(\xi\leq E\xi)$ for certain random variables
$\xi$. Let $c_p = (\E|\xi|^p)^{\frac1p}$ and $c_{p,q}=c_p/c_q$.
Examples of known estimates that are often used for $p=2$ and $p=4$
are respectively
\[\P(\xi\geq 0)\geq \Big(\frac{c_{1,p}}{2}\Big)^{\frac{p}{p-1}} \ \ \ \text{and}  \ \ \
\P(\xi\geq 0)\geq \frac{1}{4 c_{p,2}^{\frac{2 p}{p-2}}}.\] A proof
of the first estimate can be found in \cite{delaPG}. The second
estimate is obtained in \cite{SQJS}. In this note we will improve
both estimates and in several cases we will show the obtained
results are sharp.

In the last part we give some applications of the results. We
improve an estimate for second order Rademacher chaos from
\cite{SQJS}. This result has applications to certain quadratic
optimization problems (cf. \cite{BNR,SQJS}). Finally, we give
applications to Hilbert space valued random variables. In particular
this improves a result from \cite{Bu}.

\section{Probability lower bounds}

The following result is an improvement of \cite[Proposition
3.3.7]{delaPG}.
\begin{proposition}\label{prop:case1p}
Let $\xi$ be a centered non-zero random variable and let $p\in (1,
\infty)$. Then
\begin{equation}\label{eq:case1p}
\P(\xi\geq 0)\geq \P(\xi> 0) \geq
\Big(\frac{c_{1,p}}{2}\Big)^{\frac{p}{p-1}}
(\psi^{-1}(c_{1,p}))^{-\frac{1}{p-1}}.
\end{equation}
Here $\psi:[\frac12, 1)\to (0,1]$ is the strictly decreasing
function defined by
\[\psi(x) = 2 \Big( x^{-{\frac{1}{p-1}}} + (1-x)^{-\frac{1}{p-1}}
\Big)^{-\frac{p-1}{p}}.\]

The same lower bound holds for $\P(\xi< 0)$ and $\P(\xi\leq 0)$.
Moreover, the estimate \eqref{eq:case1p} for $\P(\xi\geq 0)$ and
$\P(\xi\leq 0)$ are sharp.

For all $p\in (1, \infty)$ the following bound holds
\begin{equation}\label{eq:approx}
\P(\xi\geq 0)\geq \P(\xi> 0) \geq
\Big(\frac{c_{1,p}}{2}\Big)^{\frac{p}{p-1}}
\Big(1-\Big(\Big(\frac{c_{1,p}}{2}\Big)^{-\frac{p}{p-1}}-1\Big)^{-(p-1)}\Big)^{-\frac{1}{p-1}}.
\end{equation}
\end{proposition}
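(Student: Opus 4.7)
The core of the proof is H\"older's inequality applied to both tails of $\xi$. Because $\E\xi = 0$, we have $m := \E\xi^+ = \E\xi^- = c_1/2$. Write $a := \P(\xi > 0)$ and $b := \P(\xi < 0)$. Applying H\"older with conjugate exponents $p$ and $p/(p-1)$ to $m = \E(|\xi|\one_{\{\xi > 0\}})$ gives $m^p \leq \E(|\xi|^p\one_{\{\xi > 0\}}) \cdot a^{p-1}$, and the analogous bound on the negative side. Summing these and using $\E(|\xi|^p\one_{\{\xi > 0\}}) + \E(|\xi|^p\one_{\{\xi < 0\}}) \leq c_p^p$ produces $a^{-(p-1)} + b^{-(p-1)} \leq (2/c_{1,p})^p$. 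Since $b \leq 1 - a$ and $t \mapsto t^{-(p-1)}$ is decreasing, this sharpens to
\[ h(a) := a^{-(p-1)} + (1-a)^{-(p-1)} \leq (2/c_{1,p})^p. \]
The function $h$ is $U$-shaped on $(0,1)$ with minimum $2^p$ attained at $1/2$, so the admissible set for $a$ is an interval $[a_*, 1-a_*]$ with $h(a_*) = (2/c_{1,p})^p$, and hence $\P(\xi > 0) \geq a_*$.

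To recast this implicit bound as \eqref{eq:case1p}, I substitute $x := (c_{1,p}/2)^p\, a^{-(p-1)}$, so that $a = (c_{1,p}/2)^{p/(p-1)}\, x^{-1/(p-1)}$, and analogously $1 - a = (c_{1,p}/2)^{p/(p-1)}\, y^{-1/(p-1)}$ with $y := (c_{1,p}/2)^p(1-a)^{-(p-1)}$. At $a = a_*$ the critical equation $h(a_*) = (2/c_{1,p})^p$ becomes $x_* + y_* = 1$, while $a_* + (1 - a_*) = 1$ rewrites as $x_*^{-1/(p-1)} + (1-x_*)^{-1/(p-1)} = (c_{1,p}/2)^{-p/(p-1)}$, i.e.\ $\psi(x_*) = c_{1,p}$. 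Since $a_* \leq 1/2$ translates to $x_* \geq 1/2$, we conclude $x_* = \psi^{-1}(c_{1,p})$ and the bound takes the stated form. The corresponding inequalities for $\P(\xi < 0)$ and $\P(\xi \leq 0)$ follow by applying the argument to $-\xi$.

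Sharpness is verified on the two-point distribution $\P(\xi = \alpha) = a_*$, $\P(\xi = -\beta) = 1 - a_*$ with $a_*\alpha = (1 - a_*)\beta$ (centering): a short computation gives $(c_{1,p}/2)^{-p} = a_*^{-(p-1)} + (1-a_*)^{-(p-1)}$, so the constraint is saturated and $\P(\xi \geq 0) = a_*$ matches the bound. For \eqref{eq:approx}, I use the elementary lower bound $x_*^{-1/(p-1)} \geq 1$ in $\psi(x_*) = c_{1,p}$ to obtain $(1-x_*)^{-1/(p-1)} \leq (c_{1,p}/2)^{-p/(p-1)} - 1$, whence $x_* \leq 1 - ((c_{1,p}/2)^{-p/(p-1)} - 1)^{-(p-1)}$; substituting this upper estimate for $x_*$ into \eqref{eq:case1p} (valid since $x \mapsto x^{-1/(p-1)}$ is decreasing) produces \eqref{eq:approx}. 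The only delicate point I anticipate is managing the algebraic dictionary between $a_*$ and the $\psi^{-1}$-formulation; the probabilistic content is a single H\"older step.
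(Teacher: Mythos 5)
Your argument is correct and is essentially the paper's: both rest on $\E\xi_+=\E\xi_-=c_1/2$ followed by H\"older's inequality applied separately on $\{\xi>0\}$ and $\{\xi<0\}$, the same two-point extremal example for sharpness, and the same upper estimate $\psi^{-1}(c_{1,p})\le 1-\big((c_{1,p}/2)^{-p/(p-1)}-1\big)^{-(p-1)}$ to deduce \eqref{eq:approx}. The only (organizational, and arguably slightly cleaner) difference is that you sum the two H\"older bounds into the single constraint $a^{-(p-1)}+(1-a)^{-(p-1)}\le(2/c_{1,p})^{p}$ on $a=\P(\xi>0)$ and read off the minimal admissible $a$, whereas the paper retains the moment split $u=\E\xi_+^p$ as an auxiliary parameter and minimizes the resulting lower bound over the admissible interval $[u_0,u_1]$; your change of variables shows the two answers coincide.
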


The estimate \eqref{eq:case1p} improves the well-known estimate
$\P(\xi\geq 0)\geq \Big(\frac{c_{1,p}}{2}\Big)^{\frac{p}{p-1}}$ (cf.
\cite[Proposition 3.3.7]{delaPG}) by a factor
$(\psi^{-1}(c_{1,p}))^{-(p-1)}$. The lower bound \eqref{eq:approx}
is not optimal, but in general it is more explicit than
\eqref{eq:case1p}.

In the cases $p=2$ and $p=3$ one can calculate $\psi^{-1}$
explicitly. For $p=2$, the inverse is given by $\psi^{-1}(x) =
\frac12 + \frac12\sqrt{1-x^2}$. Therefore, a straightforward
calculation gives the following explicit lower bound, which is sharp
as well.
\begin{corollary}\label{cor:p=2}
Let $\xi$ be a centered non-zero random variable. Then
\[\P(\xi\geq 0)\geq \P(\xi> 0) \geq \frac12 -
\frac12\sqrt{1-c_{1,2}^2}.\]
\end{corollary}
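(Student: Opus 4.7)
The plan is to apply Proposition \ref{prop:case1p} directly with $p = 2$, the only work being to invert $\psi$ explicitly and simplify the resulting bound.

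First I would compute $\psi$ in closed form when $p = 2$. Since $\frac{1}{p-1} = 1$, the definition collapses to
\[
\psi(x) = 2\bigl(x^{-1} + (1-x)^{-1}\bigr)^{-1/2}
       = 2\sqrt{x(1-x)},
\]
after observing that $x^{-1} + (1-x)^{-1} = \frac{1}{x(1-x)}$. To invert this on the branch $x \in [\tfrac12, 1)$, I would solve $y = 2\sqrt{x(1-x)}$, i.e.\ the quadratic $x^2 - x + y^2/4 = 0$, and pick the root $\geq \tfrac12$, which gives
\[
\psi^{-1}(y) = \tfrac12 + \tfrac12\sqrt{1-y^2}.
\]

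Next I would substitute this expression for $\psi^{-1}(c_{1,2})$ into \eqref{eq:case1p} with $p = 2$, so $\frac{p}{p-1} = 2$ and $\frac{1}{p-1} = 1$, obtaining
\[
\P(\xi \geq 0) \geq \P(\xi > 0) \geq \frac{c_{1,2}^2}{4}\cdot \frac{1}{\tfrac12 + \tfrac12\sqrt{1-c_{1,2}^2}}
 = \frac{c_{1,2}^2}{2\bigl(1+\sqrt{1-c_{1,2}^2}\bigr)}.
\]
Finally I would rationalize by multiplying numerator and denominator by $1 - \sqrt{1-c_{1,2}^2}$. The denominator simplifies to $2c_{1,2}^2$ and the $c_{1,2}^2$ cancels, giving precisely $\tfrac12 - \tfrac12\sqrt{1-c_{1,2}^2}$.

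There is no real obstacle here; the corollary is just the $p=2$ specialization of the proposition, and the main content is recognizing the telescoping identity $x^{-1}+(1-x)^{-1}=1/(x(1-x))$ and performing the standard conjugate-rationalization to turn the bound into the clean form stated.
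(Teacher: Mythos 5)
Your proposal is correct and follows exactly the route the paper intends: the paper states that for $p=2$ one has $\psi^{-1}(x)=\tfrac12+\tfrac12\sqrt{1-x^2}$ and that the corollary then follows by a "straightforward calculation," which is precisely the inversion and conjugate-rationalization you carry out. All your algebra checks out.
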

This result can be used to slightly improve certain probability
lower bounds from \cite{HK}, where the estimate
$\frac{c_{1,2}^2}{4}$ is used.

\begin{proof}[Proof of Proposition \ref{prop:case1p}]
By symmetry we only need to consider $\P(\xi>0)$. By normalization
we may assume that $c_p=1$, and therefore $c= c_1=c_{1,p}$. Let $p_1
= \P(\xi>0)$ and $p_2 = \P(\xi<0)$. Let $\xi_+ = \max\{\xi, 0\}$ and
$\xi_- = \max\{-\xi, 0\}$. Then $0=\E\xi = \E\xi_+ - \E\xi_-$ and $c
= \E|\xi| = \E\xi_+ +\E\xi_-$. It follows that $\E\xi^+=
\E\xi^-=c/2$. Let $u=\E\xi_+^p$. Then $1-u = \E\xi_-^p$. By the
Cauchy-Schwartz inequality we have
\[c^p/2^p = (\E\xi_+)^p = (\E \xi_+ \text{sign}(\xi_+))^p \leq \E\xi_+^p (\E\text{sign}(\xi_+))^{p-1} = u p_1^{p-1}.\]
Therefore, $p_1\geq \Big(\frac{c^p}{2^p u}\Big)^{\frac{1}{p-1}}$.
Similarly, one can show $p_2\geq \Big(\frac{c^p}{2^p
(1-u)}\Big)^{\frac{1}{p-1}}$. It follows that
\[p_1 = 1-\P(\xi\leq 0) \leq 1-p_2 \leq 1- \Big(\frac{c^p}{2^p
(1-u)}\Big)^{\frac{1}{p-1}}.\] Therefore, to estimate $p_1$ from
below, we only need to consider the $u\in (0,1)$ which satisfy
\[\Big(\frac{c^p}{2^p u}\Big)^{\frac{1}{p-1}}\leq 1-\Big(\frac{c^p}{2^p
(1-u)}\Big)^{\frac{1}{p-1}}.\] This is equivalent with
\[\frac{2^p}{c^p} \geq f(u):=\Big(\frac{1}{u^{\frac{1}{p-1}}}+ \frac{1}{(1-u)^{\frac{1}{p-1}}}\Big)^{p-1}.\]
\[
c \leq \phi(u) = 2 \Big( u^{-{\frac{1}{p-1}}} +
(1-u)^{-\frac{1}{p-1}} \Big)^{-\frac{p-1}{p}}.
\]
Notice that $\phi$ is strictly increasing on $(0,\frac12]$ and
strictly decreasing on $[\frac12, 1)$. One easily checks that there
exists a unique $u_0\in (0,\frac12]$ and a unique $u_1\in [\frac12,
1)$ such that $\phi(u_0) = \phi(u_1) =c$. Moreover $c \leq \phi(u)$
holds if and only if $u\in [u_0, u_1]$. It follows that
$\Big(\frac{c^p}{2^p u}\Big)^{\frac{1}{p-1}}$ attains its minimum at
$u_1$, and therefore
\[p_1\geq \Big(\frac{c}{2}\Big)^{\frac{p}{p-1}} u_1^{-\frac{1}{p-1}}. 
\]
This completes the first part of the proof.

To prove \eqref{eq:approx}, note that it suffices to estimate
$\psi^{-1}$ from above, or equivalently $\psi$ from above. Clearly
for all $x\in [1/2, 1)$,
\[\psi(x)\leq 2 \Big(1+(1-x)^{-\frac{1}{p-1}} \Big)^{-\frac{p-1}{p}}=:\alpha(x).\]
Now $\alpha^{-1}(x)=1-\Big( \Big(\frac{x}{2}\Big)^{-\frac{p}{p-1}} -
1\Big)^{-(p-1)}$. This clearly implies the result.

To prove the sharpness of \eqref{eq:case1p} let $c\in (0,1]$ be
arbitrary and let $\mu=\Big(\frac{c}{2}\Big)^{\frac{p}{p-1}}
u_1^{-\frac{1}{p-1}}$, where $u_1 = \psi^{-1}(c)$. It suffices to
construct a centered random variable $\xi$ with $\E|\xi|^p=1$,
$\E|\xi| = c$ and $\P(\xi\leq 0)=\mu$. Let $x_1 = \frac{c}{2 \mu}$
and $x_2 = \frac{c}{2 (1-\mu)}$ and let $\xi=x_1$ with probability
$\mu$ and $\xi=x_2$ with probability $1-\mu$. Then $\E|\xi| = c$ and
\[\begin{aligned}
\E|\xi|^p &= \frac{c^p}{2^p}\big( \mu^{1-p}  +  (1-\mu)^{1-p}\big)
\\ & = \frac{c^p}{2^p}\Big( \frac{2^p}{c^p}u_1   +  \Big(1-   \big(\tfrac{c}{2}\big)^{\frac{p}{p-1}}u_1^{-\frac{1}{p-1}} \Big)^{1-p}\Big)
\\ & = \frac{c^p}{2^p}\Big( \frac{2^p}{c^p}u_1   +  \Big(\big(\tfrac{c}{2}\big)^{\frac{p}{p-1}}(1-u_1)^{-\frac{1}{p-1}} \Big)^{1-p}\Big)
\\ & = \frac{c^p}{2^p}\Big( \frac{2^p}{c^p}u_1   +  \frac{2^p}{c^p}
(1-u_1)\Big) = 1.
\end{aligned}\]
\end{proof}

In \cite{SQJS} it is shown that if $\xi$ satisfies $\E\xi=0$,
$E\xi^2=1$, $\E\xi^4\leq \tau$, then $\P(\xi\geq 0)$ and $\P(\xi\leq
0)$ are both greater or equal than $(2\sqrt{3}-3)/\tau$. Below we
will improve their result. More precisely we obtain sharp lower
bounds for $\P(\xi\leq 0), \P(\xi\geq 0)$, $\P(\xi<0)$ and
$\P(\xi>0)$.

\begin{proposition}\label{prop:less}
Let $\xi$ be a centered non-zero random variable. Then $\P(\xi\geq
0)\geq \P(\xi>0) \geq f(c_{4,2}^4)$, where
\begin{equation}
f(x) := \left\{%
\begin{array}{ll}
    \frac{1}{2} - \frac12\sqrt{\frac{x-1}{x+3}}, & \hbox{if $x\in [1, \frac{3\sqrt{3}}{2}-\frac32)$;} \\
    \frac{2\sqrt{3}-3}{x}, & \hbox{if $x\geq \frac{3\sqrt{3}}{2}-\frac32$.} \\
\end{array}%
\right.
\end{equation}
The same lower bound holds for $\P(\xi<0)$ and $\P(\xi\leq 0)$.
Moreover, the estimates are already sharp for $\P(\xi\geq 0)$ and
$\P(\xi\leq 0)$.
\end{proposition}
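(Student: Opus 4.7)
The plan is to prove the lower bound in three stages: first reduce to three-atom distributions using Cauchy--Schwarz and Lyapunov's inequality; then parametrize such distributions to reduce matters to a one-variable minimization; finally solve that minimization and identify a threshold in $\tau$ at which the extremizer changes shape.

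By considering $-\xi$ and rescaling, it suffices to bound $\P(\xi>0)$ from below under $\E\xi^2 = 1$, with $\tau := \E\xi^4 \ge 1$. Set $\xi_+ = \max\{\xi,0\}$, $\xi_- = \max\{-\xi,0\}$, $a_\pm = \E\xi_\pm$, $b_\pm = \E\xi_\pm^2$; since $\xi$ is centered non-zero, $a_+ = a_- > 0$ and $b_+ + b_- = 1$. I would introduce the three-atom random variable $\tilde\xi$ taking the values $b_+/a_+$, $0$, $-b_-/a_-$ with probabilities $a_+^2/b_+$, $1 - a_+^2/b_+ - a_-^2/b_-$, $a_-^2/b_-$. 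A quick calculation gives $\E\tilde\xi = 0$ and $\E\tilde\xi^2 = 1$; Cauchy--Schwarz in the form $(\E\xi_+)^2 \le \P(\xi>0)\E\xi_+^2$ and its symmetric version simultaneously yield feasibility of $\tilde\xi$ and the key inequality $\P(\tilde\xi>0) = a_+^2/b_+ \le \P(\xi>0)$; and Lyapunov's inequality $b_\pm^3 \le a_\pm^2\E\xi_\pm^4$ gives $\E\tilde\xi^4 = b_+^3/a_+^2 + b_-^3/a_-^2 \le \E\xi^4 \le \tau$. Since the bound $f$ is decreasing in $\tau$, it suffices to establish the claim for random variables of this three-atom form.

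In the second stage, a three-atom random variable supported on $\{x,0,-y\}$ with $x,y>0$ and weights $p_1, 1-p_1-p_2, p_2$ satisfies $\E\xi=0$, $\E\xi^2=1$ precisely when $p_1 x = p_2 y =: a$ and $a^2 = p_1p_2/(p_1+p_2)$. Setting $s := p_1+p_2$ and $r := p_1 p_2$, one computes
\[
\E\xi^4 \;=\; \frac{p_1^3 + p_2^3}{p_1 p_2 (p_1+p_2)^2} \;=\; \frac{s}{r} - \frac{3}{s},
\]
so the constraint $\E\xi^4 \le \tau$ becomes $r \ge s^2/(s\tau+3)$. The smaller root of $z^2-sz+r=0$ is monotone in $r$, so the infimum of $p_1$ over admissible $r$ is attained at $r = s^2/(s\tau+3)$, giving
\[
p_1 \;\ge\; \phi(s) \;:=\; \frac{s}{2}\Bigl(1 - \sqrt{\tfrac{s\tau-1}{s\tau+3}}\Bigr), \qquad s \in [1/\tau,1].
\]

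The final stage is minimization of $\phi$ on $[1/\tau,1]$. Substituting $t := s\tau$, setting the derivative to zero and clearing radicals by squaring, the critical-point equation collapses to the quadratic $2t^2+6t-9 = 0$, whose positive root is $t_* = \tfrac{3(\sqrt3-1)}{2} = \tfrac{3\sqrt3}{2} - \tfrac32$. When $\tau \ge t_*$ the critical point $s_* = t_*/\tau$ lies in $[1/\tau,1]$; using the critical-point identity $(t_*+3)^{3/2}\sqrt{t_*-1} = t_*^2+4t_*-3$ to rewrite $1-\sqrt{(t_*-1)/(t_*+3)}$ as $2(t_*+6)/(t_*+3)^2$, a brief simplification yields $\phi(s_*) = (2\sqrt3-3)/\tau$. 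When $1 \le \tau < t_*$ the critical point lies beyond $1$, $\phi$ is monotonically decreasing on $[1/\tau,1]$, and the minimum $\phi(1) = \tfrac12 - \tfrac12\sqrt{(\tau-1)/(\tau+3)}$ is attained at $s = 1$ by a two-atom distribution. These are the two branches of $f(\tau)$; sharpness follows in each case from the explicit extremizer produced by the minimization. The principal obstacle is the algebraic bookkeeping of this last stage --- reducing the critical-point equation to the clean quadratic $2t^2 + 6t - 9 = 0$ and then simplifying $\phi(s_*)$ to the tidy value $(2\sqrt3-3)/\tau$ both require careful manipulation of the radical.
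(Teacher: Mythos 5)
Your argument is correct, and it takes a genuinely different route from the paper's. The paper never reduces to finitely supported laws: it keeps $u=\E\xi_+^2$ as the free variable, derives $p_1\ge (u^3+(1-u)^3)/(uc)$ and $p_2\ge (u^3+(1-u)^3)/((1-u)c)$ from the same two inequalities you use (Cauchy--Schwarz for $(\E\xi_+)^2\le \P(\xi>0)\E\xi_+^2$ and H\"older for $(\E\xi_+^2)^3\le(\E\xi_+)^2\E\xi_+^4$), obtains the branch $(2\sqrt 3-3)/c$ by unconstrained minimization over $u$ (minimum at $u=1/\sqrt3$), and then gets the other branch by noting that $p_1\le 1-p_2$ confines $u$ to an interval $[u_0,u_1]$ which excludes $1/\sqrt3$ exactly when $c<\tfrac{3\sqrt3}{2}-\tfrac32$, so the constrained minimum moves to the endpoint $u_1$. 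Sharpness is then handled by two separate ad hoc constructions. Your reduction to the three-atom class supported on $\{x,0,-y\}$ packages both inequalities into one structural step, converts the problem into a single one-variable minimization over $s=\P(\xi\neq 0)$ whose critical point $t_*=s\tau=\tfrac{3\sqrt3}{2}-\tfrac32$ produces both branches and the threshold simultaneously, and delivers the extremizers as a byproduct; I checked the algebra ($\E\xi^4=s/r-3/s$, the quadratic $2t^2+6t-9=0$, and $\phi(s_*)=(2\sqrt3-3)/\tau$) and it is right. One small point needs repair: in the regime $\tau\ge t_*$ your extremizer carries positive mass $1-s_*$ at $0$, so while it attains the bound for $\P(\xi>0)$, it has $\P(\xi\ge 0)=f(\tau)+(1-s_*)>f(\tau)$, and the claimed sharpness for $\P(\xi\ge 0)$ does not follow from it directly; you still need to push the zero atom to a small negative value $-\varepsilon$ and let $\varepsilon\downarrow 0$, which is precisely the paper's $\xi_\varepsilon$ family. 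In the regime $\tau<t_*$ the extremizer is two-atomic and sharpness for all four probabilities is immediate.
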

\begin{proof}
By symmetry we only need to consider $\P(\xi>0)$. By normalization
we may assume that $c_2=1$ and therefore $c:=c_{4}^4=c_{4,2}^4$. The
proof of the first part is a slight modification of the argument in
\cite{SQJS}. Let $p_1 = \P(\xi>0)$ and $p_2 = \P(\xi<0)$. Let $\xi_+
= \max\{\xi, 0\}$ and $\xi_- = \max\{-\xi, 0\}$. Then $0=\E\xi =
\E\xi_+ - \E\xi_-$. Let $s=\E\xi^+= \E\xi^-$. By H\"older's
inequality we have $\E\xi_+^2 \leq (\E\xi^4_+)^{\frac13}
s^{\frac23}$ and $\E\xi_-^2 \leq (\E\xi^4_-)^{\frac13} s^{\frac23}$.
From this and $1=\E\xi^2 = \E\xi_+^2+\E\xi_-^2$ we obtain that
\[c \geq \E\xi_+^4 + \E\xi_-^4 \geq (\E\xi_+^2)^{3} s^{-2} + (\E\xi_-^2)^{3} s^{-2} = (u^3 +(1-u)^3)s^{-2},\]
where $u=\E\xi^2_+$. On the other hand by the Cauchy-Schwartz
inequality we have
\[s^2 = (\E\xi_+)^2 = (\E \xi_+ \text{sign}(\xi_+))^2 \leq \E\xi_+^2 (\E\text{sign}(\xi_+)) = u p_1.\]
Therefore, $p_1\geq \frac{u^3+ (1-u)^3}{u c}$. Minimization over
$u\in (0,1)$ gives $u=\frac{1}{\sqrt{3}}$ and $p_1\geq
\frac{(2\sqrt{3}-3)}{c}$.

Next we improve the estimate for $c\in [1,
\frac{3\sqrt{3}}{2}-\frac32)$. In the same way as for $p_1$, one can
show that $p_2 \geq \frac{u^3+ (1-u)^3}{(1-u) c}$. Therefore,
\[p_1 = 1-\P(\xi<0)\leq 1-p_2 \leq \frac{u^3+ (1-u)^3}{(1-u) c}. \]
Combining this with the lower estimate for $p_1$, the only $u\in
(0,1)$ which have to be considered are those for which
\[\frac{u^3+ (1-u)^3}{u c} \leq 1-\frac{u^3+ (1-u)^3}{(1-u)c}.\]
One easily checks that this happens if and only if
\[u_0=\frac{1}{2} - \frac12\sqrt{\frac{c-1}{c+3}}\leq u\leq \frac{1}{2} + \frac12\sqrt{\frac{c-1}{c+3}}=u_1.\]
For the $c$'s we consider one may check that
$\frac{1}{\sqrt{3}}\notin (u_0, u_1)$. Therefore, the minimum is
attained at the boundary. Since $g(u_0) = u_1$ and $g(u_1) = u_0$,
$u_0$ is the minimum of $g$ on $[u_0, u_1]$. This shows that
$p_1\geq u_0$.

To show this estimate is sharp for $x\geq
\frac{3\sqrt{3}}{2}-\frac32$ we will construct a certain family of
random variables $(\xi_\e)_{\e\geq 0}$. Let $\e\geq 0$ be not too
large. Let $\xi_{\e}$ be equal to $x_i(\e)$ with probability
$\lambda_i$, for $i=1, 2, 3$. Let
\[
\lambda_1 =  \Big(\frac32 - \frac{\sqrt{3}}{2}\Big)/c,  \ \lambda_2
= 1- \Big(\frac{3\sqrt{3}}{2}- \frac32\Big)/c,  \ \lambda_3 =
(2\sqrt{3}-3)/c.
\]
Let $x_2(\e) = -\e$, and let $x_1(\e)<0$ and
$x_3(\e)>0$ be the solution of
\[\E\xi=\lambda_1 x_1 + \lambda_2 \e +\lambda_3 x_3 = 0\]
\[\E\xi^2=\lambda_1 x_1^2 + \lambda_2 \e^2 +\lambda_3 x_3^2 = 1.\]
Notice that
\[x_1(0) = -\frac{1-\frac13 \sqrt{3} }{\sqrt{2-\sqrt{3}}}\sqrt{c}, \ x_2=0, \ \ x_3(0) = \frac{\frac13 \sqrt{3} }{\sqrt{2-\sqrt{3}}} \sqrt{c}.\]
For $\e>0$ small enough one may check that
$x_1(\e)<x_2(\e)<0<x_3(\e)$, and $P(\xi_\e\geq 0) = \lambda_3$.
Moreover, it holds that
\[\lim_{\e\downarrow 0}\E\xi^4_{\e} = \lim_{\e\downarrow 0}\lambda_1 x_1^4(\e) + \lambda_2 x_2^4(\e) +\lambda_3 x_3^4(\e) = \lambda_1 x_1^4(0) + \lambda_2 x_2^4(0) +\lambda_3 x_3^4(0) = c.\]
This completes the proof.

The sharpness of the result for $x\in [1,
\frac{3\sqrt{3}}{2}-\frac32)$ follows if we take $\xi$ a random
variable with two values. Indeed, let $x_2 =
\frac12\sqrt{2+2c+2\sqrt{(c-1)(c+3)}}$, $x_1=-1/x_2$, $\lambda_1 =
x_2/(x_2-x_1)$ and $\lambda_2=-x_1/(x_2-x_1)$. One easily checks
that $\E\xi=0$, $\E\xi^2=1$ and $\E\xi^4=c$ and
\[\lambda_1= \frac{1}{2} - \frac12\sqrt{\frac{c-1}{c+3}}.\]
\end{proof}

In \cite{SQJS} also a lower bound is obtained if one uses the $p$-th
moment instead of the fourth moment. They show that $\P(\xi\geq
0)\geq \frac{1}{4} c_{p,2}^{-\frac{2p}{p-2}}$. In the next remark we
improve the factor $\frac14$.

\begin{remark}
Let $\xi$ be a centered non-zero random variable and let $p\in (2,
\infty)$. Then
\[\P(\xi\geq 0)\geq \P(\xi>0) \geq \frac{1}{4} c_{p,2}^{-\frac{2p}{p-2}} \Big({(3-4/p)^{-\frac{1}{p-2}}}+1\Big) \geq \frac{(e^{-1}+1)}{4}c_{p,2}^{-\frac{2p}{p-2}}.\]
\end{remark}
\begin{proof}
It follows from the proof in \cite{SQJS} that $\P(\xi>0) \geq
\min_{u\in (0,1)} c_{p,2}^{-\frac{2p}{p-2}}f(u)$, where $f(u) =
\frac{1}{u} (u^{p-1} + (1-u)^{p-1})^{\frac{2}{p-2}}$. The function
$f$ has a minimum $u=u_0$ in $[\tfrac12,1)$. Moreover it satisfies
$f'(u_0)=0$.

Indeed, if $u_0\in (0,\tfrac12)$ would be a minimum of $f$ then,
$f(1-u_0)<f(u_0)$, which is impossible. That a minimum $u$ exists on
$[\frac12,1)$ and that it satisfies $f'(u)=0$ is clear. A
calculation shows that $f'(u) = \alpha(u) g(u)$, where $\alpha(u)>0$
and
\[g(u) = p u^{p-1} -p (1-u)^{p-2} u - p (1-u)^{p-2} + 2 (1-u)^{p-2}.\]
Therefore, $f' (u) = 0$ if and only if $g(u) = 0$. Let us estimate
$u_0$ from above. Since $g(u_0)=0$, we have
\[(1-u_0)^{p-2} \Big(1-\frac{2}{p}\Big) = u_0(u_0^{p-2}-(1-u_0)^{p-2}).\]
Using that $u_0\geq \frac12$, we obtain that
\[(1-u_0)^{p-2} \Big(1-\frac2p\Big) \geq \frac12(u_0^{p-2}-(1-u_0)^{p-2}),\]
and therefore
\[\frac{1}{u_0} \geq {(3-4/p)^{-\frac{1}{p-2}}}+1.\]

We conclude that
\[f(u) \geq \Big({(3-4/p)^{-\frac{1}{p-2}}}+1\Big) (u^{p-1} + (1-u)^{p-1})^{\frac{2}{p-2}}\geq \Big({(3-4/p)^{-\frac{1}{p-2}}}+1\Big) \frac{1}{4}.\]
The final estimate follows from $(3-4/p)^{\frac{1}{p-2}}\downarrow
e$ as $p\downarrow 2$.
\end{proof}

\section{Applications}

We will need the following estimate for second order chaoses. It is
well-known to experts. For a random variable $\xi$ and $p\in [1,
\infty)$, let $\|\xi\|_p = (\E|\xi|^p)^{\frac1p}$.

\begin{lemma}\label{lem:momentineq}
Let $(\xi_{i})_{i\geq 1}$ be an i.i.d. sequence of symmetric random variables
with $\E|\xi_i|^2= 1$ and $\E|\xi_i|^4\leq 3$. Then for any real numbers
$(a_{i,j})_{1\leq i<j\leq n}$ it holds that
\begin{equation}\label{eq:rad42}
\Big\|\sum_{1\leq i<j\leq n} \xi_i \xi_j a_{ij}\Big\|_4 \leq
\sqrt[4]{15} \Big\|\sum_{1\leq i<j\leq n} \xi_i \xi_j
a_{ij}\Big\|_2.
\end{equation}
Moreover, in the case $(\xi_i)_{i\geq 1}$ is a Rademacher sequence
or a Gaussian sequences the inequality \eqref{eq:rad42} is sharp.
\end{lemma}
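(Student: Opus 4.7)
Let $T=\sum_{i<j}\xi_i\xi_j a_{ij}$ and introduce the symmetric matrix $B\in\R^{n\times n}$ with $B_{ij}=a_{ij}$ for $i<j$, $B_{ji}=B_{ij}$, and $B_{ii}=0$, so that $T=\tfrac12\xi^{\top}B\xi$. A direct computation using the i.i.d.\ and mean-zero properties gives $\|T\|_2^2=\sum_{i<j}a_{ij}^2=:S_2=\tfrac12\operatorname{tr}(B^2)$.

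The plan is to expand $\E T^4$ as a sum over ordered $4$-tuples of pairs $(i_k,j_k)_{k=1}^4$ and to group contributions by the multigraph on $\{1,\ldots,n\}$ whose edges are $\{i_k,j_k\}$. Since the $\xi_i$ are symmetric and i.i.d., only multigraphs with all vertex-degrees even survive; the surviving configurations split into (a) four identical edges, (b) two distinct edges each appearing twice (either vertex-disjoint or sharing one vertex), and (c) four distinct edges forming a $4$-cycle. Counting orderings ($6$ in each sub-case of (b), $24$ in (c)), evaluating the expectations which depend only on $\tau_4:=\E\xi_1^4$, and identifying the $4$-cycle sum via $\operatorname{tr}(B^4)$, I expect to arrive at
\[
\E T^4 \;=\; 3S_2^2+3\operatorname{tr}(B^4)+(\tau_4-3)^2 S_4+3(\tau_4-3) R,
\]
where $S_4=\sum_{i<j}a_{ij}^4$ and $R=\sum_u\bigl(\sum_{v\neq u}B_{uv}^2\bigr)^2$.

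To conclude, I combine three observations: (i) $1\leq\tau_4\leq 3$ gives $-2\leq\tau_4-3\leq 0$; (ii) expanding $\bigl(\sum_v B_{uv}^2\bigr)^2\geq\sum_v B_{uv}^4$ yields $R\geq 2S_4$; (iii) for the real eigenvalues $\lambda_i$ of $B$, $\operatorname{tr}(B^4)=\sum_i\lambda_i^4\leq\bigl(\sum_i\lambda_i^2\bigr)^2=4S_2^2$. By (i) and (ii), $(\tau_4-3)^2 S_4+3(\tau_4-3) R=(\tau_4-3)\bigl[(\tau_4-3)S_4+3R\bigr]\leq 0$, since $(\tau_4-3)S_4+3R\geq -2S_4+6S_4=4S_4\geq 0$. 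Together with (iii), this yields $\E T^4\leq 3 S_2^2+12 S_2^2=15\|T\|_2^4$, proving \eqref{eq:rad42}.

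For sharpness in the Rademacher or Gaussian case I will take $a_{ij}\equiv 1$ and let $n\to\infty$, so that $T=\tfrac12\bigl((\sum_i\xi_i)^2-\sum_i\xi_i^2\bigr)$. The CLT together with moment convergence (immediate for Rademacher; for Gaussian use the exact identity above with $\tau_4=3$) yields $T/n\Rightarrow\tfrac12(Z^2-1)$ with $Z\sim N(0,1)$, whence $\|T\|_4^4/\|T\|_2^4\to\E(Z^2-1)^4/(\E(Z^2-1)^2)^2=60/4=15$. The main obstacle is the bookkeeping in the expansion of $\E T^4$, in particular verifying that the $4$-cycle sum together with the ``cherry'' contributions from case (b) assemble precisely into $3\operatorname{tr}(B^4)$ modulo the $\tau_4-3$ corrections.
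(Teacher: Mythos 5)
Your proof is correct, and it takes a genuinely different route from the paper's. The paper first dominates the left-hand side by the corresponding Gaussian chaos (using $\E\xi_i^2\le\E\g_i^2$, $\E\xi_i^4\le\E\g_i^4$), then diagonalizes $A=PDP^T$ and uses rotation invariance to reduce to $\sum_i\lambda_i(\g_i^2-1)$, whose fourth moment is computed exactly; the constant $15$ then comes from $\sum_i\lambda_i^4\le\big(\sum_i\lambda_i^2\big)^2$. You instead expand $\E T^4$ combinatorially for general symmetric $\xi_i$, keeping the explicit dependence on $\tau_4$. The identity you flag as the main obstacle does check out: decomposing $\operatorname{tr}(B^4)$ over closed $4$-walks $u_0\to u_1\to u_2\to u_3\to u_0$ according to whether $u_0=u_2$ and/or $u_1=u_3$ gives $\operatorname{tr}(B^4)=2S_4+2(R-2S_4)+8Q$, where $Q$ is the sum over genuine $4$-cycles of the products of their entries, i.e.\ $24Q=3\operatorname{tr}(B^4)+6S_4-6R$; combined with the contributions $\tau_4^2S_4$ (quadrupled edge), $6\cdot\tfrac12(S_2^2+S_4-R)$ (two disjoint doubled edges) and $6\tau_4\cdot\tfrac12(R-2S_4)$ (two doubled edges sharing a vertex), this yields exactly your displayed formula, and your steps (i)--(iii) finish the bound. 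Note that both arguments ultimately rest on the same spectral inequality $\operatorname{tr}(B^4)\le(\operatorname{tr}(B^2))^2$. What your version buys is self-containedness: the Gaussian domination step is asserted without proof in the paper and itself requires essentially your multigraph expansion to justify (the $4$-cycle terms are the ones not obviously monotone in the moments, and your computation shows they carry no moment factor at all); moreover your exact formula isolates the deficit $(\tau_4-3)^2S_4+3(\tau_4-3)R\le0$. The sharpness argument is also fine; for the Rademacher case you should add one line justifying moment convergence in the CLT (e.g.\ Khintchine's inequality bounds the eighth moments of $n^{-1/2}\sum_i r_i$, giving uniform integrability of the fourth powers), which is the same point the paper leaves implicit when it transfers between the Rademacher and Gaussian cases.
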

\begin{proof}
For $j>i$ let $a_{ij} = a_{ji}$ and let $a_{ii}=0$. By homogeneity
we may assume that
\begin{equation}\label{eq:ass12}
\Big\|\sum_{1\leq i<j\leq n} \xi_i \xi_j a_{ij}\Big\|_2^2 =
\sum_{1\leq i<j\leq n} a_{ij}^2=\frac12.
\end{equation} Let
$(\g_i)_{i\geq 1}$ be a sequence of independent standard Gaussian
random variables. Since $\E|\xi_i|^2 \leq \E|\g_i|^2$ and
$\E|\xi_i|^4 \leq \E|\g_i|^4$, we have that
\begin{equation}\label{eq:radGaus}
\Big\|\sum_{1\leq i<j\leq n} \xi_i \xi_j a_{ij}\Big\|_4\leq
\Big\|\sum_{1\leq i<j\leq n} \g_i \g_j a_{ij}\Big\|_4
\end{equation}
Denote by $A$ the matrix $(a_{ij})_{1\leq i,j\leq n}$. By
diagonalization we may write $A = PDP^T$, where $D=(\lambda_i)$ is a
diagonal matrix and $P$ is an orthogonal matrix. Clearly, $\lb
A\gamma,\gamma \rb = \lb D\gamma',\gamma' \rb$, where $\gamma=\g_1,
\ldots, \g_n$ and $\gamma'= P^T \g$. Since $P$ is orthogonal
$\gamma'$ has the same distribution as $\g$. Therefore,
\[0=\E\lb A\gamma,\gamma \rb = \E\lb D\gamma',\gamma' \rb
= \sum_{i=1}^n \lambda_i.\] Similarly one may check that
$\sum_{i=1}^n \lambda_i^2 = 1$. It follows that
\[\begin{aligned}
\E\lb A\gamma,\gamma \rb^4 & = \E|\lb D\gamma',\gamma' \rb|^4 =
\E\Big|\sum_{i=1}^n \lambda_i (\g_i^2-1)\Big|^4
=36 \sum_{i=1}^n \lambda_i^4  + 24 \sum_{i=1}^n \lambda_i^2
\\ & \leq 36\Big(\sum_{i=1}^n \lambda_i^2\Big)^{2}  + 24 \sum_{i=1}^n \lambda_i^2 = 60.
\end{aligned}\]
Therefore,
\[\E\Big|\sum_{1\leq i<j\leq n} \g_i \g_j a_{ij}\Big|^4 = \frac{1}{16} \E\lb A\gamma,\gamma \rb^4 \leq \frac{15}{4}.\]
Recalling \eqref{eq:ass12} and \eqref{eq:radGaus} this implies the
result.

To show that the inequality \eqref{eq:rad42} is sharp it suffices to
consider the case where the $(\xi_i)_{i\geq 1}$ are standard
Gaussian random variables. Indeed, if \eqref{eq:rad42} holds for a
Rademacher sequence $(\xi_i)_{i\geq 1}$, then the central limit
theorem implies \eqref{eq:rad42} for the Gaussian case. Now assume
$(\xi_i)_{i\geq 1}$ are standard Gaussian random variables. Let
$a_{ij} = 1$ for all $i\neq j$ and $a_{ii}=0$. Notice that
$\sum_{1\leq i<j\leq n} \xi_i \xi_j a_{ij} = \frac12\lb A
\xi,\xi\rb$, where $\xi=(\xi_{i})_{i=1}^n$. For the right-hand side
of \eqref{eq:rad42} we have
\[\Big\|\sum_{1\leq i<j\leq n} \xi_i \xi_j a_{ij}\Big\|_2^2 =\sum_{1\leq i<j\leq n} a_{ij}^2 = \frac{n(n-1)}{2}.\]
As before, we may write $A = P D P^T$, where $D$ is the diagonal
matrix with eigenvalues $(\lambda_i)_{i=1}^n$ of $A$ and $P$ is
orthogonal. It is easy to see that the eigenvalues of $A$ are $n-1$
and $-1$, where the latter has multiplicity $n-1$. By the same
calculation as before it follows that
\[\E\lb A \xi,\xi\rb^4 = 60\sum_{i=1}^n \lambda_i^4  + 24 \sum_{i\neq j}^n \lambda_i^2 \lambda_j^2 = 36 ((n-1)^4 +n) + 24 ((n-1)^2+n)^2.\]
Letting $C$ denote the best constant in \eqref{eq:rad42} gives that
\[\frac{36}{16} ((n-1)^4 +n) + \frac{24}{16} ((n-1)^2+n)^2 \leq C^4 \frac{n^2(n-1)^2}{4}.\]
Dividing by $n^4/4$ and letting $n$ tend to infinity yields $9+6
\leq C^4$, as required.
\end{proof}

By standard arguments (cf. \cite[Chapter 3]{delaPG}) using
H\"older's inequality one also obtains from Lemma
\ref{lem:momentineq} that
\begin{equation}\label{eq:radp2}
\Big\|\sum_{1\leq i<j\leq n} \xi_i \xi_j a_{ij}\Big\|_p \leq
15^{\frac{p-2}{2p}} \Big\|\sum_{1\leq i<j\leq n} \xi_i \xi_j
a_{ij}\Big\|_2, \ \ \text{for} \ p\in (2,4)
\end{equation}
and
\begin{equation}\label{eq:rad2p}
\Big\|\sum_{1\leq i<j\leq n} \xi_i \xi_j a_{ij}\Big\|_2 \leq
15^{\frac{2-p}{2p}} \Big\|\sum_{1\leq i<j\leq n} \xi_i \xi_j
a_{ij}\Big\|_p, \ \ \text{for} \ p\in (0,2).
\end{equation}

As an immediate consequence of Proposition \ref{prop:less} and Lemma
\ref{lem:momentineq} we obtain the following result. We state it for
Rademacher random variables, but the same result holds for random
variables $(\xi_n)_{n\geq 1}$ as in Lemma \ref{lem:momentineq}.
\begin{proposition}\label{thm:main}
Let $(r_i)_{i\geq 1}$ be a Rademacher sequence. For any real numbers
$(a_{ij})_{i,j=1}^n$ it holds that
\[\P\Big(\sum_{1\leq i<j\leq n} r_i r_j a_{ij}\geq 0\Big) \geq \frac{2\sqrt{3}-3}{15}> \frac{3}{100}.\]
If not all $a_{ij}$ are identically zero then
\[\P\Big(\sum_{1\leq i<j\leq n} r_i r_j a_{ij}> 0\Big) \geq \frac{2\sqrt{3}-3}{15}> \frac{3}{100}.\]
\end{proposition}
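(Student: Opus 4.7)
The plan is to apply Proposition \ref{prop:less} directly to the random variable $\xi = \sum_{1 \leq i < j \leq n} r_i r_j a_{ij}$. First I would verify that $\xi$ is centered: since $\E[r_i r_j]=0$ for $i \neq j$ by independence and symmetry of the Rademacher variables, linearity gives $\E\xi = 0$. Next, if not all $a_{ij}$ vanish, then independence together with $r_i^2 = 1$ yields $\E\xi^2 = \sum_{1\leq i<j\leq n} a_{ij}^2 > 0$, so $\xi$ is non-zero, and Proposition \ref{prop:less} is applicable.

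The next step is to control $c_{4,2}^4$. Since the Rademacher sequence is symmetric with $\E r_i^2 = 1$ and $\E r_i^4 = 1 \leq 3$, it satisfies the hypotheses of Lemma \ref{lem:momentineq}. That lemma yields $\|\xi\|_4 \leq \sqrt[4]{15}\,\|\xi\|_2$, which is exactly $c_{4,2}^4 \leq 15$. Applying Proposition \ref{prop:less}, we obtain $\P(\xi \geq 0) \geq \P(\xi > 0) \geq f(c_{4,2}^4)$.

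To turn this into the claimed numerical bound, I would check that $f$ is decreasing on $[1,\infty)$: on $[\frac{3\sqrt{3}-3}{2},\infty)$ this is obvious from the formula $f(x) = (2\sqrt{3}-3)/x$, while on $[1,\frac{3\sqrt{3}-3}{2})$ the function $x \mapsto \frac{x-1}{x+3}$ is increasing, so $f(x) = \frac{1}{2}-\frac{1}{2}\sqrt{\frac{x-1}{x+3}}$ is decreasing; continuity at the gluing point $x_0 = \frac{3\sqrt{3}-3}{2}$ is a short algebraic check (both formulas yield $1 - 1/\sqrt{3}$ there). Since $15 \geq x_0$, monotonicity gives $f(c_{4,2}^4) \geq f(15) = (2\sqrt{3}-3)/15$. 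Finally, $(2\sqrt{3}-3)/15 > 3/100$ reduces to $40\sqrt{3} > 69$, i.e. $4800 > 4761$, which is immediate. The case in which all $a_{ij}$ vanish is trivial for the $\{\xi \geq 0\}$ statement since then $\P(\xi \geq 0) = 1$, and is excluded in the strict-inequality statement.

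The argument is essentially an assembly of earlier results, so there is no substantial obstacle; the one mildly delicate point is the verification that $f$ is globally decreasing (and continuous at the seam $x_0$), since Proposition \ref{prop:less} is stated piecewise, and one needs this monotonicity to push the bound all the way out to $c_{4,2}^4 = 15$.
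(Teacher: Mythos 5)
Your proof is correct and follows exactly the route the paper intends: the paper states this proposition as an immediate consequence of Proposition \ref{prop:less} and Lemma \ref{lem:momentineq} without further detail, and you supply precisely the missing glue (the bound $c_{4,2}^4\leq 15$, the monotonicity and continuity of $f$ at the seam, and the numerical check $40\sqrt{3}>69$). No issues.
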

This result has applications to certain quadratic optimization
problems (cf. \cite{BNR} and \cite[Theorem 4.2]{SQJS}). It improves
the known result with $\frac{1}{87}$ from \cite[Lemma 4.1]{SQJS}.

The conjecture (see \cite{BNR}) is that the estimate in Proposition
\ref{thm:main} holds with $\frac14$. The methods we have described
will probably never give such a bound, and a more sophisticated
argument will be needed. However, another conjecture is that for a
Rademacher sequence $(r_i)_{i\geq 1}$ and $p=1$, \eqref{eq:rad2p}
holds with constant $2$, i.e.
\[
\Big\|\sum_{1\leq i<j\leq n} r_i r_j a_{ij}\Big\|_2 \leq
2\Big\|\sum_{1\leq i<j\leq n} r_i r_j a_{ij}\Big\|_1.
\]
If this would be true, then Corollary \ref{cor:p=2} implies that
\[\P\Big(\sum_{1\leq i<j\leq n} r_i r_j a_{ij}\geq 0\Big) \geq \frac12 - \frac14\sqrt{3} > \frac{1}{15}\]
which is better than $\frac{3}{100}$.

\begin{remark}
Let $(\eta_i)_{i\geq 1}$ be independent exponentially distributed random
variables with $\E \eta_i = 1$ and let $\xi = \sum_{i=1}^n a_i(\eta_i-1)$ for
real numbers $(a_{i})_{i\geq 1}$. In \cite{SQJS} the estimate $\P(\xi\geq
0)>\frac{1}{20}$ has been obtained. This follows from Proposition
\ref{prop:less} and (see \cite{SQJS})
\begin{equation}\label{eq:exp42}
(\E|\xi|^4)^{\frac14}\leq 9 (\E|\xi|^2)^{1/2}.
\end{equation}
The inequality \eqref{eq:exp42} is optimal. As in \eqref{eq:rad2p}
we have that \eqref{eq:exp42} implies that
\[(\E|\xi|^2)^{\frac12}\leq  C \E|\xi|)\]
for a certain constant $C$ and $C\leq 3$. One the other hand, taking $n=2$, and
$a_1=1$, $a_2=-1$, gives that $C\geq \sqrt{2}$. It is interesting to find the
optimal value of $C$. If this value is small enough, then Proposition
\ref{prop:case1p} will give a better result than $\frac{1}{20}$.

A similar situation can be considered if one replaces $\eta_i$ by
$\g_i^2$.
\end{remark}

Next we prove another probability bound. A uniform bound can already
be found in \cite{Bu}.
\begin{corollary}
Let $(r_i)_{i\geq 1}$ be a Rademacher sequence. Let $(H, \lb\cdot,
\cdot\rb)$ be a Hilbert space. For any vectors $(a_i)_{i=1}^n$ from
$H$ it holds that
\begin{equation}\label{eq:radlower}
\P\Big(\Big\|\sum_{i=1}^n r_i a_i\Big\|\leq \Big(\sum_{i=1}^n
\|a_i\|^2\Big)^{\frac12} \Big)\geq \frac{2\sqrt{3}-3}{15}>
\frac{3}{100},
\end{equation}
\begin{equation}\label{eq:radupper}
\P\Big(\Big\|\sum_{i=1}^n r_i a_i\Big\|\geq \Big(\sum_{i=1}^n
\|a_i\|^2\Big)^{\frac12} \Big)\geq
\frac{2\sqrt{3}-3}{15}>\frac{3}{100}.
\end{equation}
\end{corollary}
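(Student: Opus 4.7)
The plan is to reduce both inequalities to the quadratic-chaos bound of Proposition~\ref{thm:main}. Setting $S_n = \sum_{i=1}^n r_i a_i$ and using $r_i^2=1$, I would expand
\begin{equation*}
\|S_n\|^2 = \sum_{i=1}^n \|a_i\|^2 + 2\sum_{1\leq i<j\leq n} r_i r_j \lb a_i, a_j\rb.
\end{equation*}
Writing $\xi := 2\sum_{1\leq i<j\leq n} r_i r_j \lb a_i, a_j\rb$, this is a second-order Rademacher chaos in the \emph{real} coefficients $b_{ij} = 2\lb a_i, a_j\rb$ (if $H$ is complex one replaces the inner product by its real part; the argument is unchanged). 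The key observation is that, after squaring both sides, the events appearing in \eqref{eq:radlower} and \eqref{eq:radupper} coincide exactly with $\{\xi\leq 0\}$ and $\{\xi\geq 0\}$ respectively.

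Given this translation, \eqref{eq:radupper} will follow by applying Proposition~\ref{thm:main} directly to the coefficients $b_{ij}$. For \eqref{eq:radlower}, I will apply Proposition~\ref{thm:main} a second time, this time to the chaos with coefficients $-b_{ij}$ (still a chaos of the same form, with the same Rademacher sequence). This yields $\P(-\xi\geq 0)\geq \frac{2\sqrt{3}-3}{15}$, which is precisely the desired bound on $\P(\xi\leq 0)$.

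The only point deserving separate attention is the degenerate case in which all cross-products $\lb a_i, a_j\rb$ with $i\neq j$ vanish (for instance when the $a_i$ are pairwise orthogonal). Then $\xi\equiv 0$ and both events have probability one, so the bounds hold trivially; in every other case Proposition~\ref{thm:main} applies to a non-zero chaos and delivers the claim. I do not anticipate any genuine obstacle: the substantive work is already contained in Proposition~\ref{thm:main} and Lemma~\ref{lem:momentineq}, and what remains is only the routine polarization identity for $\|S_n\|^2$ above together with its sign-flip symmetry.
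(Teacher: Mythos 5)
Your proposal is correct and follows essentially the same route as the paper: expand $\|\sum_i r_i a_i\|^2$ via $r_i^2=1$ to identify the events with $\{\xi\geq 0\}$ and $\{\xi\leq 0\}$ for the chaos $\xi$ with coefficients $2\Re\lb a_i,a_j\rb$, then invoke Proposition~\ref{thm:main} (for the lower-tail case, applied to the negated coefficients, which is what the paper's ``the proof is the same'' implicitly does). Your explicit treatment of the degenerate case is a harmless addition.
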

For real numbers $(a_i)_{i=1}^n$, \eqref{eq:radlower} holds with
constant $\frac{3}{8}$ (see \cite{HoKl}). The well-known conjecture
is that it holds with $\frac12$. Again for real numbers
$(a_i)_{i=1}^n$ \eqref{eq:radupper} holds with constant
$\frac{1}{10}$ (see \cite{Ol}). The conjecture (see \cite{HK}) is
that it holds with constant $\frac{7}{64}$.
\begin{proof}
As in \cite{Bu} one can show that
\[\P\Big(\Big\|\sum_{i=1}^n r_i a_i\Big\|\geq \Big(\sum_{i=1}^n \|a_i\|^2\Big)^{\frac12}
\Big) = \P\Big(\sum_{1\leq i<j\leq n} r_i r_j a_{ij}\geq 0\Big),\]
where $a_{ij} = 2\text{Re}(\lb a_i, a_j\rb)$. Therefore, the result
follows from Proposition \ref{thm:main}. The proof of
\eqref{eq:radlower} is the same.
\end{proof}

In the next result we obtain a probability bound for Gaussian random
variables with values in a Hilbert space.
\begin{proposition}
Let $H$ be a real separable Hilbert space and let $G:\O\to H$ be a
nonzero centered Gaussian random variable. Then
\begin{equation}\label{eq:GaussianH}
\frac{2\sqrt{3}-3}{15} \leq \P(\|G\|> (\E\|G\|^2)^{\frac12}) \leq
\frac12.
\end{equation}
\end{proposition}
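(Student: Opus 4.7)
\emph{Proof plan.} The strategy is to reduce the problem, via the Karhunen--Lo\`eve decomposition, to a centered real-valued random variable $\xi$ whose moments can be computed explicitly, and then to apply Proposition~\ref{prop:less}. Since $H$ is separable and $G$ is centered Gaussian, the covariance operator of $G$ is trace class; the spectral theorem yields nonnegative eigenvalues $(\lambda_i)_{i\ge 1}$ with $\sum_i \lambda_i = \E\|G\|^2 < \infty$, an orthonormal sequence $(e_i)_{i\ge 1}$, and i.i.d.\ standard normals $(g_i)_{i\ge 1}$ such that $G = \sum_i \sqrt{\lambda_i}\,g_i e_i$ in $L^2(\Omega;H)$. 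Put
\[
\xi := \|G\|^2 - \E\|G\|^2 = \sum_i \lambda_i(g_i^2 - 1),
\]
so that $\{\|G\| > (\E\|G\|^2)^{1/2}\} = \{\xi > 0\}$ (and $\xi$ has a continuous distribution since $G$ is nonzero).

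For the lower bound I would compute the second and fourth moments of $\xi$, in the same spirit as in the proof of Lemma~\ref{lem:momentineq}. By independence and centering of the summands, together with $\E(g^2-1)^2 = 2$ and $\E(g^2-1)^4 = 60$, one gets $\E\xi^2 = 2\sum_i \lambda_i^2$ and
\[
\E\xi^4 = 60\sum_i \lambda_i^4 + 12\sum_{i\ne j}\lambda_i^2\lambda_j^2 \;\le\; 60\Bigl(\sum_i \lambda_i^2\Bigr)^2 = 15(\E\xi^2)^2,
\]
so that $c_{4,2}^4 \le 15$. Since $15 > \tfrac{3\sqrt 3}{2}-\tfrac32$, Proposition~\ref{prop:less} applied to $\xi$ immediately gives $\P(\xi > 0) \ge (2\sqrt 3 - 3)/15$.

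For the upper bound I would observe that $\P(\|G\| > (\E\|G\|^2)^{1/2}) \le \P(\|G\| > \mathrm{med}(\|G\|)) = \tfrac12$ as soon as $\mathrm{med}(\|G\|) \le (\E\|G\|^2)^{1/2}$, and the latter is implied, via Jensen's inequality $(\E\|G\|^2)^{1/2} \ge \E\|G\|$, by the classical median--mean bound $\mathrm{med}(\|G\|) \le \E\|G\|$ for centered Gaussians in Hilbert space. One proves this by truncating the Karhunen--Lo\`eve series and passing to the limit, reducing matters to the finite-dimensional assertion $\mathrm{med}\bigl(\sum_{i=1}^n \lambda_i g_i^2\bigr) \le \sum_{i=1}^n \lambda_i$; the latter follows from the unimodality of the density of $\sum_i \lambda_i g_i^2$ (obtained as a convolution of the monotone decreasing densities of the individual summands $\lambda_i g_i^2$) combined with the strict positivity of the third central moment $\E\xi^3 = 8\sum_i \lambda_i^3$, via the mode--median--mean ordering for unimodal distributions with positive skewness.

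The main obstacle is the upper bound: the lower bound is essentially a one-line consequence of the moment computation and Proposition~\ref{prop:less}, while $\mathrm{med}(\|G\|) \le \E\|G\|$ (or, equivalently, $\mathrm{med}(\|G\|^2) \le \E\|G\|^2$) is a genuinely nontrivial statement about the generalised chi-square law whose proof requires either the skewness/unimodality argument sketched above or an appeal to a known result on norms of Gaussian vectors.
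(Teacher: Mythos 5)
Your treatment of the lower bound coincides with the paper's: the Karhunen--Lo\`eve expansion, the identification of $\{\|G\|>(\E\|G\|^2)^{1/2}\}$ with $\{\xi>0\}$ for $\xi=\sum_i\lambda_i(g_i^2-1)$, the moment bound $\E\xi^4\le 15(\E\xi^2)^2$, and the application of Proposition~\ref{prop:less} with $c_{4,2}^4\le 15$. This part is correct (your identity $\E\xi^4=60\sum_i\lambda_i^4+12\sum_{i\ne j}\lambda_i^2\lambda_j^2$ is the right one and yields $60\big(\sum_i\lambda_i^2\big)^2$ cleanly), and nothing further is needed there.

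The gap is in the upper bound. The paper does not prove it internally either: it invokes Kwapie\'n's result \cite{Kw94} that the median of a convex function of a Gaussian vector does not exceed its mean, applied to $x\mapsto\|x\|$, giving $\mathrm{med}(\|G\|)\le\E\|G\|\le(\E\|G\|^2)^{1/2}$ and hence the bound $\tfrac12$. Your proposed substitute fails at two points. First, unimodality of $\sum_i\lambda_ig_i^2$ does not follow from its being ``a convolution of monotone decreasing densities'': already $\chi^2_3$, a convolution of three decreasing densities, has a density vanishing at $0$ and peaking at $x=1$, so monotonicity is lost after one convolution and the induction cannot proceed; moreover unimodality is in general not preserved under convolution of non-symmetric laws (Chung's classical example). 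The distribution is in fact unimodal, but the known reason is self-decomposability together with Yamazato's theorem --- a far heavier tool than what you sketch. Second, and decisively, the ``mode $\le$ median $\le$ mean'' ordering for unimodal laws with positive third central moment is folklore and \emph{false}: there are unimodal, positively skewed densities (certain Weibull laws, for instance) whose median exceeds their mean, so positivity of $\E\xi^3=8\sum_i\lambda_i^3$ does not yield $\mathrm{med}(\|G\|^2)\le\E\|G\|^2$. To close the argument you should either cite \cite{Kw94} as the paper does, or give an honest proof of $\mathrm{med}(f(G))\le\E f(G)$ for convex $f$ (e.g.\ by the rotation argument with an independent copy $G'$, using that $(G+G')/\sqrt2$ and $(G-G')/\sqrt2$ are independent copies of $G$).
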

By \cite{Kw94} the upper bound $\frac12$ is actually valid for
Gaussian random variables with values in a real separable Banach
space. We also refer to \cite{SB} for related results on Gaussian
quadratic forms.
\begin{proof}
It is well-known that we can find independent standard Gaussian
random variables $(\g_n)_{n\geq 1}$, orthonormal vectors
$(a_n)_{n\geq 1}$ in $H$ and positive numbers $(\lambda_n)_{n\geq
1}$ such that $G = \sum_{n\geq 1} \sqrt{\lambda_n} \g_n a_n$, where
the series converges almost surely in $H$. The convergence also
holds in $L^2(\O;H)$. Notice that
\[\xi:=\|G\|^2 - \E\|G\|^2 = \sum_{n\geq 1}\lambda_k(\g_k^2-1),\]
so that as in Lemma \ref{lem:momentineq} $\E\xi^2 = 2\sum_{n\geq 1}
\lambda^2_k$ and $\E\xi^4 \leq 60 \sum_{n\geq 1} \lambda^2_k$.
Therefore the lower estimate follows from Proposition
\ref{prop:less}.
\end{proof}

{\em Acknowledgment} -- The author thanks professor S. Kwapie\'n for
helpful discussions.

\providecommand{\bysame}{\leavevmode\hbox
to3em{\hrulefill}\thinspace}


\begin{thebibliography}{10}

\bibitem{BNR}
{\sc A.~Ben-Tal, A.~Nemirovski, and C.~Roos}, \emph{Robust solutions
of
  uncertain quadratic and conic quadratic problems}, SIAM J. Optim. \textbf{13}
  (2002), no.~2, 535--560 (electronic).

\bibitem{Bu}
{\sc D.~L. Burkholder}, \emph{Independent sequences with the {S}tein
property},
  Ann. Math. Statist. \textbf{39} (1968), 1282--1288.

\bibitem{delaPG}
{\sc V.~H. de~la Pe{\~n}a and E.~Gin{\'e}}, \emph{Decoupling},
Probability and
  its Applications (New York), Springer-Verlag, New York, 1999, From dependence
  to independence, Randomly stopped processes. $U$-statistics and processes.
  Martingales and beyond.

\bibitem{SQJS}
{\sc S.~He, Z.~Luo, J.~Nie, and S.~Zhang}, \emph{Semidefinite
relaxation bounds
  for indefinite homogeneous quadratic optimization}, preprint,
  http:$\slash$arxiv.org$\slash$PS$_-$cache$\slash$math$\slash$pdf$\slash$0701%
$\slash$0701070v1.pdf.

\bibitem{HK}
{\sc P.~Hitczenko and S.~Kwapie{\'n}}, \emph{On the {R}ademacher
series},
  Probability in Banach spaces, 9 (Sandjberg, 1993), Progr. Probab., vol.~35,
  Birkh\"auser Boston, Boston, MA, 1994, pp.~31--36.

\bibitem{HoKl}
{\sc R.~Holzman and D.~J. Kleitman}, \emph{On the product of sign
vectors and
  unit vectors}, Combinatorica \textbf{12} (1992), no.~3, 303--316.

\bibitem{Kw87}
{\sc S.~Kwapie{\'n}}, \emph{Decoupling inequalities for polynomial
chaos}, Ann.
  Probab. \textbf{15} (1987), no.~3, 1062--1071.

\bibitem{Kw94}
{\sc S.~Kwapie{\'n}}, \emph{A remark on the median and the
expectation of
  convex functions of {G}aussian vectors}, Probability in Banach spaces, 9
  (Sandjberg, 1993), Progr. Probab., vol.~35, Birkh\"auser Boston, Boston, MA,
  1994, pp.~271--272.

\bibitem{Ol}
{\sc K.~Oleszkiewicz}, \emph{On the {S}tein property of {R}ademacher
  sequences}, Probab. Math. Statist. \textbf{16} (1996), no.~1, 127--130.

\bibitem{SB}
{\sc G.~J. Sz{\'e}kely and N.~K. Bakirov}, \emph{Extremal
probabilities for
  {G}aussian quadratic forms}, Probab. Theory Related Fields \textbf{126}
  (2003), no.~2, 184--202.

\end{thebibliography}
\end{document}